\newtheorem{theorem}{Theorem}[section]
\newtheorem{lemma}[theorem]{Lemma}
\newtheorem*{thm1.1}{\bf Theorem 1.1}
\newtheorem*{thm1.2}{\bf Theorem 1.2 (Unconditional)}
\newtheorem*{lem5.3}{\bf Lemma 5.3}
\newtheorem*{thmv}{ Theorem:  Blomer (\cite{VB})} 
\newtheorem*{thml}{ Theorem:  Lin (\cite{ LIN})} 
\theoremstyle{definition}
\newtheorem{corollary}{Corollary}
\theoremstyle{remark}
\newtheorem{remark}[]{{ \bf Remark:}}
\numberwithin{equation}{section}
\begin{document}
\title{On double shifted convolution sum of  $SL(2, \mathbb{Z})$ Hecke eigen forms } 
\author{Saurabh Kumar Singh}

\address{ Stat-Math Unit,
Indian Statistical Institute, 
203 BT Road,  Kolkata-700108, INDIA.}

\email{skumar.bhu12@gmail.com}

\subjclass[2010]{11A25, 11N37 }
\date{\today}

\keywords{ Maass forms, Hecke eigenforms, Voronoi summation formula, Poisson summation formula. }

\maketitle

\begin{abstract}
Let $\lambda_i (n)$ $i= 1, 2, 3$  denote the normalised  Fourier coefficients of  holomorphic eigenform or   Maass cusp form. In this paper we shall consider the sum: 

\[
 S:=  \frac{1}{H}\sum_{h\leq H} V\left( \frac{h}{H}\right)\sum_{n\leq  N}  \lambda_1 (n)  \lambda_2 (n+h) \lambda_3 (n+ 2h)W\left( \frac{n}{N} \right),
\] 
\noindent
 where $V$ and $W$ are  smooth bump functions, supported on  $[1, 2]$.  We shall prove a  nontrivial upper bound, under the assumption that $H\geq N^{1/2+ \epsilon}$.
 
\end{abstract}

\section{ I\lowercase{ntroduction} }

\noindent
 The study of the shifted convolution  sum $D_k(N, h) := \sum_{ N<n \leq 2N} d_k(n) d_k(n+h)$ of generalised divisor function $ d_k(n)$ is a central problem in number theory, where $ d_k(n)$ is defined to be the Dirichlet coefficient of $ \zeta^k(s) $ in the half plane $ \Re (s)> 1 $. The sum $D_k(N, h)$ comes naturally in the computation of $ 2k$-th power moment of Riemann Zeta function, which is defined as
 \[
  I_k(T):= \int_1^T |\zeta(s)|^{2k} ds. 
 \]
 
 \noindent
The gereral additive divisor problem consists of estimation of the quantity $ \Delta_k (x, h)$, which is given by the equation

\begin{equation} \label{generaladditive}
\sum_{ n \leq x} d_k(n) d_k(n+h) = x P_{2k-2} (\log x; h) +  \Delta_k (x, h), 
\end{equation} 
where $k\geq 2$ is a fixed integer, $ P_{2k-2} (\log x; h) $ is a suitable polynomial of degree $ 2k-2$ in $\log x$ whose coefficients depend on $k$ and $h$,  and $  \Delta_k (x, h)$ is supposed to be the error term.  From the above analogy, it is expected that  $D_k(N, h)$ should be asymptotic to $ c_{k, h} N \log^{2k-2} N$, for some constant  $ c_{k, h} >0 $, uniformaly for $h$ in some range. 
Even for a fixed h, this has only been proved for $k\leq2$, and no proof exists for $k\geq3$.

 \noindent
The behaviour of $D_k(N, h)$ for $k=1$ is very simple. For $k=2$,  Ingham (\cite{IN}) first established the asymptotic formula that

\[
D_2 (N, h) \sim \frac{6}{\pi^2} \sigma_{-1} (h) N \log^2 N,
\] for any $h \in \mathbb{N}$, where $ \sigma_{-1} (h) = \sum_{ j \mid h} j^{-1} $. Later, T. Estermann  established the asymptotic expansion (see \cite{ET})

\[
\sum_{n\leq x} d(n) d(n+h) = x P_h(\log x) + O \left( x^{11/12} \log^x \right), 
\]  where $ P_h (x)$ is a quadratic polynomial with leading coefficient $ \frac{6}{\pi^2} \sigma_{-1} (h)$. 
Many authors have since revisited this problem. The best known results for the error term are due to Duke, Friedlander and Iwaniec \cite{DFI} and Meurman \cite{MERU}.

\noindent

For $k=3$, several authors have studied the property of $ D_3 (N, h)$.  Some recent results are given in  \cite{BB} and \cite{IV}.  Averaging over shift $ h\leq H$, Baier, Browning, Marasingha and Zhao ( \cite{BB} ) have given an asymptotic formula for $G(N, H):= \sum_{h\leq H} D(N, H) $, provided $ N^{1/6 + \epsilon } \leq H \leq N^{1- \epsilon}$.  Precisely, they proved that 

\[
\sum_{h\leq H} \Delta(N, h) \ll ( H^2 + H^{1/2} N^{13/12} ) N^\epsilon,
\] where $1\leq H \leq N $, and $ \sum_{h\leq H} \Delta(N, h) $ is given by equation \eqref{generaladditive}. For $k\geq 3$ the behaviour of $ \Delta(N, h) $ has been studied  by Ivi{\' c} and Wu (\cite{IV}). They proved that, for $k\geq 3$ we have

$$ \sum_{h\leq H} \Delta_k(N, h) \ll ( H^2 +  N^{1 +\beta_k} ) N^\epsilon  \hspace{1cm} (1\leq H\leq N), $$
where $ \beta_k $ is defined by 
$$ \beta_k := \inf \left\lbrace b_k : \int_1^X | \Delta(x)|^2 dx \ll X^{1+2b_k} \right\rbrace,$$ where 
$ \Delta(x) $ is defined by equation 
$$ D_k(x) := \sum_{n \leq x} d_k(n)= x p_{k-1} (\log x) + \Delta(x).$$

\noindent
In this paper we will study the behaviour of double shifted convolution sum of the coefficients of  holomorphic cusp forms of weight $k$, or  Maass eigenforms  on the full modular group $ SL(2, \mathbb{Z})$ (for details see for example \cite{HI} and \cite{IK} ).  We shall denote the space of such form by $ \mathfrak{S} (\mathbb{Z})$. Let  $f: \mathbb{H} \rightarrow \mathbb{C}$ be such that $ f\in \mathfrak{S} (\mathbb{Z}) $.  Since $f(z+1)= f(z)$, $f$ admits a Fourier expansion at infinity and we denote its normalised  $n$th Fourier coefficient by $\lambda_f (n) $.

\noindent
Double shifted convolution sum for Fourier coefficients is  given by:

\begin{equation} \label{doubleshifted}
S(N, h):= \sum_{n\leq N} \lambda_1(n) \lambda_2(n+h) \lambda_3(n+2h), 
\end{equation}
\noindent
where $\lambda_i \in  \mathfrak{S} (\mathbb{Z})$ for  $ i = 1, 2, 3$.  Summing over all  shifts $h\leq H$ we shall  prove the following theorem:  

\begin{theorem} \label{maintheorem}
 With  $ S(N, h)$ defined as above, assume that $ H \gg N^{1/2+ \epsilon}$, where $\delta>0$. Then there exists a positive constant $ \delta = \delta(\epsilon)$ such that
\[
\frac{1}{H} \sum_{h\leq H}  V\left( \frac{h}{H} \right)\sum_{n\leq N} \lambda_1(n) \lambda_2(n+h) \lambda_3(n+2h) W\left( \frac{n}{N}\right) \ll N^{1- \delta},
\] 
where  $V$ and $W$ are  smooth bump functions, supported on the interval  $[1, 2]$.
 
\end{theorem}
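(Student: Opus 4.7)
The natural plan is to apply Cauchy--Schwarz in the $n$-variable to remove $\lambda_1(n)$, open the resulting square, and reduce the off-diagonal contribution to a smoothly weighted single shifted convolution sum of $\lambda_3$, to which one can apply the theorems of Blomer~\cite{VB} and Lin~\cite{LIN}. The assumption $H \geq N^{1/2+\epsilon}$ is expected to appear naturally as the balance point between the diagonal contribution and the saving coming from the shifted convolution estimate.

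Concretely, writing $HS = \sum_n W(n/N)\lambda_1(n) U(n)$ with
\[
U(n) = \sum_h V(h/H)\lambda_2(n+h)\lambda_3(n+2h),
\]
Cauchy--Schwarz gives $H^2|S|^2 \leq A \cdot B$, where $A = \sum_n W(n/N)|\lambda_1(n)|^2 \ll N^{1+\epsilon}$ by Rankin--Selberg, and $B = \sum_n W(n/N)|U(n)|^2$. Expanding the square in $B$ and parametrising the pair $(h_1, h_2)$ by $r = h_2 - h_1$, the diagonal $r = 0$ contributes $\ll NH^{1+\epsilon}$ via H\"older combined with known fourth-moment bounds for $\lambda_2, \lambda_3$. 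For the off-diagonal $r \neq 0$ (with $|r| \leq 2H$), substitute $m = n+h_1$, interchange the $m$- and $h_1$-sums, and then set $k = m+h_1$ to bring $B_{\mathrm{off}}$ into the shape
\[
\sum_{0 < |r| \leq 2H}\sum_m \lambda_2(m)\overline{\lambda_2(m+r)}\sum_k \lambda_3(k)\overline{\lambda_3(k+2r)}\,\Psi_{m,r}(k),
\]
where $\Psi_{m,r}(k) = V((k-m)/H)\,V((k-m+r)/H)\,W((2m-k)/N)$ is smooth and supported in a window of length $\sim H$ about $k \sim N$. The innermost $k$-sum is a smoothly weighted single shifted convolution of $\lambda_3$ with shift $2r$, so the Blomer/Lin estimate bounds it by $\ll H^{1-\delta_0}$ for some absolute $\delta_0 > 0$, uniformly in $m$ and $r$. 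Combined with $\sum_m |\lambda_2(m)\lambda_2(m+r)|W \ll N^{1+\epsilon}$ (Cauchy--Schwarz plus Rankin--Selberg) and a sum over $|r| \leq 2H$, this yields $B_{\mathrm{off}} \ll H^{2-\delta_0} N^{1+\epsilon}$, whence
\[
|S|^2 \ll \frac{N^{2+\epsilon}}{H} + N^{2+\epsilon}\,H^{-\delta_0}.
\]
Under $H \geq N^{1/2+\epsilon}$ both terms are then $\ll N^{2-2\delta}$ for a suitable $\delta = \delta(\epsilon) > 0$, giving $|S| \ll N^{1-\delta}$.

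The main obstacle is the shifted-convolution step: the theorems of Blomer and Lin are usually stated for length-$N$ smoothed sums, and one must check that they transfer uniformly to the length-$H$ weight $\Psi_{m,r}$ across the full range of shifts $|2r| \leq 4H$, while absorbing the parity/divisibility conditions introduced by the substitutions $m = n+h_1$ and $k = m+h_1$. A secondary technical point is the averaged Rankin--Selberg bound for $|\lambda_2 \lambda_3|^2$ used in the diagonal estimate; in the Maass case this is handled via H\"older together with known individual fourth-moment estimates for Hecke eigenvalues.
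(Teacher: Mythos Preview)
Your Cauchy--Schwarz reduction is carried out correctly, and the shape of $B_{\mathrm{off}}$ you reach is accurate: for each fixed $m$ and $r$, the innermost $k$-sum is a shifted convolution $\sum_{k}\lambda_3(k)\overline{\lambda_3(k+2r)}\,\Psi_{m,r}(k)$ over a window of \emph{length $H$ located at height $\sim N$}. The gap is in the bound you claim for this sum.

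First, the theorems of Blomer and Lin that you cite do not apply here. Both are estimates for sums in which the shift variable is \emph{averaged}; they say nothing about an individual shifted convolution with a fixed shift $2r$. What you actually need is a pointwise bound of Good/Jutila/Blomer--Harcos type. Second, and more seriously, even granting such a pointwise input, the bound it produces for a smooth sum of length $H$ sitting at height $N$ is governed by the location $N$, not by the length $H$: by differencing (or directly in the spectral/delta-method proof, where the modulus $Q\asymp N^{1/2}$ is forced by the size of the integers involved), one only gets $\sum_{k}\lambda_3(k)\overline{\lambda_3(k+2r)}\,\Psi_{m,r}(k)\ll N^{1-\delta_0}$, not $H^{1-\delta_0}$. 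Feeding this into your argument gives
\[
|S|^2 \ \ll\ \frac{N^{2+\epsilon}}{H}\;+\;\frac{N^{3-\delta_0+\epsilon}}{H},
\]
and with the best available $\delta_0=1/3$ the second term is nontrivial only for $H\gg N^{2/3+\epsilon}$. In other words, your scheme recovers Lin's corollary but not the range $H\gg N^{1/2+\epsilon}$ asserted in the theorem.

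The paper proceeds quite differently and avoids Cauchy--Schwarz altogether. It applies Heath--Brown's $\delta$-method twice, introducing free variables $m=n+h$ and $l=n+2h$ with moduli $q_1,q_2\le Q=\sqrt{N}$; then it Poisson-sums in $h$ (this is where the $h$-average is exploited directly, rather than being lost to absolute values after Cauchy--Schwarz), applies Voronoi to the $m$- and $l$-sums, and finally uses the Wilton-type bound $\sum_{n\le X}\lambda_1(n)e(\alpha n)\ll X^{1/2+\epsilon}$ on the remaining $n$-sum. This produces the clean estimate $S\ll N^{3/2+\epsilon}/H$, which is nontrivial precisely when $H\gg N^{1/2+\epsilon}$. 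The key feature your plan misses is that the $h$-average must be harnessed at the harmonic-analysis level (Poisson), not dissipated by Cauchy--Schwarz before any analytic input is applied.
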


One of the motivation for studying the double shifted convolution sum for Fourier coefficients is the corresponding problem for the divisior function. Successful analysis of the sum
\begin{equation} \label{divisorshift}
T_h (X)= \sum_{n\leq X} d(n-h) d(n) d(n+h) 
\end{equation}
has not been completed yet, even for a single positive integer $h$. It is conjectured that
 $ T_h (X) \sim c_h X \log^3 X$, for  a suitable constant $c_h>0$. A 
heuristic analysis based on the underlying Diophantine equations suggests that one should take 
$$ c_h= \frac{11}{8} f(h) \prod_p \left( 1- \frac{1}{p}\right)^2 \left( 1+ \frac{2}{p}\right),$$
where $f$ is a suitable multiplicative function. It has been proved by  Browning  that ( see \cite[Theorem 2]{TD3})

\begin{equation} \label{browningtheorem}
\sum_{h\leq H} \left( T_h (X) - c_h X \log^3 X \right) = o(H  X \log^3 X), 
\end{equation} 
provided $H\geq X^{3/4 + \epsilon} (\epsilon >0).$ 

Recently, Valentine Blomer improved the range of $H$ substantially to $X^{1/3+ \epsilon}$ by using spectral theory of automorphic forms. His method is flexible enough to adopt for more general correlation sum. He proves the following theorem

\begin{thmv} 
  Let $W$ be a smooth function with compact support in $[1, 2]$ with Mellin transform $ \widehat{W}.$ 
 Let $1\leq H \leq N/3 $ and let $ k \geq 2$ be an integer. Let $a_n$,  $N \leq n \leq2N$, be any sequence of complex numbers and let $r_d (n)$ denotes the Ramanujan sum. Then
 \begin{align*}
 \sum_{h\leq H} W\left( \frac{h}{H}\right) \sum_{N\leq n \leq 2N} a(n) \tau(n-h) \tau(n+h) & = H \widehat{W} (1)  \sum_{N\leq n \leq 2N} a(n) \sum_{ d} \frac{r_d(2n)}{d^2} (\log n + 2\gamma  - 2 \log d)^2  \\
 &  +O\left( N^\epsilon \left(\frac{H^2}{N^{1/2}} + HN^{1/4} + (HN)^{1/2} + \frac{N}{H^{1/2}} \right) ||a||_2 \right), 
 \end{align*}
 where the $O$-constant depends on (the Sobolev norms of ) $W$ and $\epsilon$, and $ ||a||$ is  $\ell^2$-norm. 
\end{thmv}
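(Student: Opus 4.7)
The plan is to swap the order of summation, placing the $n$-sum outside, and to analyze the inner sum $S(n):=\sum_h W(h/H)\tau(n-h)\tau(n+h)$ for each fixed $n\in[N,2N]$. The outer sum against $a(n)$ is later handled by Cauchy--Schwarz in $n$, which is what introduces the factor $\|a\|_2$ into the error term.

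The first step is to open both divisor functions symmetrically by the hyperbola method, writing $\tau(n-h)=2\sum_{d_1\le\sqrt{n-h}}\mathbf 1_{d_1\mid n-h}-\mathbf 1_{\sqrt{n-h}\in\mathbb Z}$ and analogously for $\tau(n+h)$. For a fixed pair $(d_1,d_2)$, the joint divisibility conditions collapse by the Chinese Remainder Theorem into a single congruence $h\equiv h_0(n,d_1,d_2)\pmod{d}$ with $d=[d_1,d_2]$, solvable iff $\gcd(d_1,d_2)\mid 2n$. Poisson summation in $h$ along this progression separates the zero frequency $H\widehat W(1)/d$ from non-zero frequencies
\[
\frac{H}{d}\,\widehat W\!\left(\frac{\xi H}{d}\right)e\!\left(\frac{\xi h_0(n,d_1,d_2)}{d}\right),\qquad 1\le|\xi|\ll dN^\epsilon/H,
\]
beyond which the rapid decay of $\widehat W$ truncates the sum.

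Collecting the $\xi=0$ contributions and re-summing the $(d_1,d_2)$-sum by the lcm variable $d$, the arithmetic factor compresses into $r_d(2n)/d^2$ (the Ramanujan sum appearing because $d\mid (n-h)$ and $d\mid(n+h)$ jointly force $d\mid 2n$), while the factor $(\log n+2\gamma-2\log d)^2$ emerges from completing the truncated hyperbola-method divisor sums (contributing $\log n+2\gamma$) together with the correction at the cap $d_i\le\sqrt{n\pm h}$ (contributing $-2\log d$), exactly as in the leading polynomial of Estermann's asymptotic for $D_2(N,h)$. This reproduces the stated main term $H\widehat W(1)\sum_n a(n)\sum_d r_d(2n)d^{-2}(\log n+2\gamma-2\log d)^2$.

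The substantive work is the $\xi\ne 0$ off-diagonal. After the CRT splitting, the phase $e(\xi h_0/d)$ refactors into $e(\xi\overline{u}/d_1)e(-\xi\overline{v}/d_2)$ with $u,v$ functions of $n$ and the splitting of $d$. Cauchy--Schwarz in $n$ opens a square (producing $\|a\|_2^2$) inside which the $d$-sum of phases assembles into Kloosterman sums $S(\xi,\pm 2\xi n;d)$. I would feed these into the Kuznetsov trace formula to convert the $d$-sum into a spectral average over the discrete and continuous spectra on $SL(2,\mathbb Z)\backslash\mathbb H$, and bound the resulting automorphic inner sum by the Deligne--Iwaniec spectral large sieve (falling back to Weil's bound directly whenever $d$ is small enough that no spectral expansion beats it). Balancing the ranges of $d\le N$ and $|\xi|\ll dN^\epsilon/H$ across four regimes --- Weil-dominant small $d$, intermediate $d$, the large-sieve diagonal, and long spectral average for $d$ near $N$ --- yields respectively the four error components $H^2N^{-1/2}$, $HN^{1/4}$, $(HN)^{1/2}$ and $NH^{-1/2}$. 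The main obstacle is exactly this spectral step: each of the four bounds is sharp only if the right tool is applied in the correct range, and any inefficiency in the ranges inflates the weakest term $NH^{-1/2}$, which is the one controlling how small $H$ may be taken. By comparison, the main-term identification and the CRT bookkeeping are essentially routine.
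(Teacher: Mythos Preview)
This theorem is not proved in the present paper at all: it is quoted verbatim from Blomer \cite{VB} as motivational background (note the custom environment \texttt{thmv}, labelled ``Theorem: Blomer''), and the paper offers no argument for it. Consequently there is no ``paper's own proof'' to compare your proposal against.

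For what it is worth, your sketch is a fair outline of the method Blomer actually uses in \cite{VB}: open the two divisor functions, Poisson in $h$ to isolate the main term carrying the Ramanujan sum $r_d(2n)/d^2$ and the Estermann-type polynomial $(\log n+2\gamma-2\log d)^2$, and then treat the nonzero frequencies spectrally via Kuznetsov and the spectral large sieve, with the four error terms arising from the different regimes. One point to be careful about in your write-up is the claim that the off-diagonal phase ``refactors into $e(\xi\overline{u}/d_1)e(-\xi\overline{v}/d_2)$''; the dependence of the residue class $h_0$ on $n$ and on the lcm structure is a bit more entangled than this, and in Blomer's argument the Kloosterman sums do not arise quite so directly. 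But these are details of Blomer's paper, not of the paper under review here.
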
 Using this, Blomer obtains the following corollary
 
 \begin{corollary}
    Let $W$ be a smooth function with compact support in $[1, 2]$ with Mellin transform $ \widehat{W}.$ 
 Let $1\leq H \leq N/3 $ and let $ k \geq 2$ be an integer. Then
 
\begin{align} \label{blomer}
 \sum_{h\leq H} W\left( \frac{h}{H}\right) \sum_{N\leq n \leq 2N} \tau_k (n) \tau(n-h) \tau(n+h) & =  \widehat{W} (1)  HN Q_{k+1} (\log N)   \notag\\
 &  \hspace{5pt} +O\left( N^\epsilon \left(H^2  + N H^{1/2} + N^{3/2} H^{-1/2} + HN^{1- \frac{1}{k+2}} \right) \right), 
  \end{align}
  where $\tau_k$ denotes the $k$-fold divisor function, $Q_k+1$ is a polynomial (depending only on $k$) of degree $k + 1$ and leading constant,
  
  \[
  \frac{1}{(k-1) !} \prod_p  \left( 1 +\frac{1}{p} \right) \left(  1- \frac{1}{p^{\delta_p =2}} \left( 1 -\frac{1}{p+1}  \right)^k \right), 
  \]
  and the implied constant in the error term depends on (the Sobolev norms of ) $ W$, $k$ and $\epsilon$.
 \end{corollary}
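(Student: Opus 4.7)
The plan is to specialize Blomer's Theorem to the sequence $a_n = \tau_k(n)$ and then carry out an arithmetic evaluation of the resulting main term. First, I would use the standard bound $\sum_{N \leq n \leq 2N} \tau_k(n)^2 \ll N (\log N)^{k^2-1}$, so that $\|a\|_2 \ll N^{1/2+\epsilon}$; substituting into Blomer's error term yields
$$N^\epsilon \bigl(H^2 + H N^{3/4} + N H^{1/2} + N^{3/2} H^{-1/2}\bigr).$$
Three of these match the claimed error directly, and the remaining $HN^{3/4}$ is $\leq HN^{1-1/(k+2)}$ for every $k \geq 2$ (with equality at $k = 2$), so it is absorbed into the claimed $HN^{1-1/(k+2)}$, which ultimately comes from the main-term extraction below.

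Next, I would evaluate the main term from Blomer's Theorem,
$$\widehat{W}(1)\, H \sum_{N \leq n \leq 2N} \tau_k(n) \sum_d \frac{r_d(2n)}{d^2}\bigl(\log n + 2\gamma - 2\log d\bigr)^2.$$
Expanding the square and applying the Ramanujan identity $\sum_d r_d(m) d^{-s} = \sigma_{1-s}(m)/\zeta(s)$, differentiated once and twice in $s$ to absorb the $\log d$ and $(\log d)^2$ factors and evaluated at $s = 2$, the inner $d$-sum rewrites as a polynomial in $\log n$ whose coefficients are explicit arithmetic functions of $n$, principally $\sigma_{-1}(2n)/\zeta(2)$ together with its logarithmic relatives. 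The problem therefore reduces to evaluating sums of the form
$$S_j(N) := \sum_{N \leq n \leq 2N} \tau_k(n)\, \sigma_{-1}(2n)\, (\log n)^j, \qquad j = 0, 1, 2,$$
together with a few similar variants. The Dirichlet series $\sum_n \tau_k(n)\sigma_{-1}(2n) n^{-s}$ has an Euler product and factors as $\zeta(s)^k \zeta(s+1) G(s)$ with $G$ regular in a half-plane to the left of $s = 1$; Perron with a contour shift then gives
$$S_j(N) = N \cdot P_{k+j-1}(\log N) + O\bigl(N^{1 - 1/(k+2) + \epsilon}\bigr),$$
the error arising from a standard convexity-type bound on the shifted contour. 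Combining the $S_j$ with the full quadratic weight $(\log n + 2\gamma - 2\log d)^2$ produces the polynomial $Q_{k+1}(\log N)$ of total degree $k + 1$, and multiplying the contour-shift error by $H$ delivers the claimed $HN^{1-1/(k+2)}$ contribution.

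The leading coefficient of $Q_{k+1}$ is $\tfrac{1}{(k-1)!} G(1)/\zeta(2) = \tfrac{1}{(k-1)!} \prod_p G_p(1)/\zeta_p(2)$. I would compute each local factor from the generating identity
$$\sum_{\nu \geq 0} \binom{\nu + k - 1}{k - 1}\, \sigma_{-1}(2 p^\nu)\, p^{-\nu s},$$
which yields a closed form via the standard $(1 - X)^{-k}$ expansion. At odd $p$, this collapses to a factor involving $(1 + 1/p)$ and $(1 - 1/(p+1))^k$; at $p = 2$, the factor differs because $\sigma_{-1}(2 \cdot 2^\nu) = \sigma_{-1}(2^{\nu + 1})$ shifts the $2$-adic exponent rather than simply multiplying by $\sigma_{-1}(2) = 3/2$ as it does for odd $p$. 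Collecting all local factors produces the Euler product displayed in the statement of the corollary.

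The main obstacle is the Stage 2 bookkeeping: tracking the cross-terms from the expansion of $(\log n + 2\gamma - 2\log d)^2$ together with the first and second $s$-derivatives of $\sigma_{1-s}(2n)/\zeta(s)$ at $s = 2$, and collecting them into a single clean polynomial $Q_{k+1}$ whose leading coefficient matches the stated Euler product. The separate treatment of the prime $p = 2$ in the Euler-product evaluation must be carried out with care because of the $\sigma_{-1}(2n)$ asymmetry, and this is the source of the Kronecker-delta exponent appearing in the leading constant.
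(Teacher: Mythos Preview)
This corollary is not proved in the present paper: it is quoted from Blomer \cite{VB} (the sentence immediately preceding it reads ``Using this, Blomer obtains the following corollary''), so there is no in-paper argument to compare your proposal against.

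Your plan is nonetheless the correct one and agrees with the derivation in \cite{VB}. Specializing Blomer's theorem to $a_n=\tau_k(n)$ with $\|a\|_2\ll N^{1/2+\epsilon}$ produces exactly the four error terms you list, and your observation that the resulting $HN^{3/4}$ is dominated by $HN^{1-1/(k+2)}$ for every $k\ge2$ is correct. For the main term, rewriting the $d$-sum via the identity $\sum_d r_d(m)d^{-s}=\sigma_{1-s}(m)/\zeta(s)$ and its $s$-derivatives at $s=2$, and then running a Perron/contour-shift argument on the Dirichlet series attached to $\tau_k(n)$ against the resulting bounded multiplicative weight, is the standard route to the polynomial $Q_{k+1}$ and its Euler-product leading coefficient, including the separate treatment of $p=2$ forced by the argument $2n$ in $\sigma_{-1}(2n)$. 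The only cosmetic point: the particular exponent $1-1/(k+2)$ is the one Blomer chooses to record, and a direct Perron computation with your ``convexity-type'' input may in fact yield a different (for small $k$, sharper) exponent. Since any power saving suffices here, your outline proves the corollary as stated; just be aware that matching the displayed exponent exactly means quoting Blomer's estimate rather than re-deriving it.
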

The above asymptotic formula is non trivial (in fact with a power saving error term) for 
$$ N^{1/3+ \epsilon} \leq H \leq N^{1-\epsilon},$$ 
and it is independent of $k$.  In the case of  $k =2$, this  improves Browning’s result substantially. Here, the lower bound for $H$ is coming from the third error term of equation \eqref{blomer},  which is the limit of the automorphic forms machinery.  It is worth noting that the divisor function can be viewed as Fourier coefficients of Eisenstein series. Blomer remarked that  using  Jutila's circle method, one can prove analogous result for Fourier coefficients of cusp form  in place of divisor function. Using this idea ,  Yongxiao Lin \cite{LIN} proved the following theorem:

\begin{thml}
Let $1\leq H \leq X/3$. Let $W$ be a smooth function with compact support in $[1, 2]$, and $a_n$,
$X\leq n \leq 2X$, be any sequence of complex numbers. Let $\lambda_1(n)$, $\lambda_2(n)$ be Hecke eigenvalues of holomorphic Hecke eigencuspforms of weight $\kappa_1$, $\kappa_2$ for $SL(2, \mathbb{Z})$ respectively. Then

\begin{align*}
\sum_{h\leq H} W\left( \frac{h}{H}\right) \sum_{N\leq n \leq 2N} a(n) \lambda_1(n-h) \lambda_2(n+h) \ll
 N^\epsilon  \frac{N}{H}\left( (HN)^{1/2} + \frac{N}{H^{1/2}} \right) ||a||_2. 
\end{align*}

\end{thml}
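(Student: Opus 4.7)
The plan is to follow the approach indicated by Blomer and adapt his method to the setting of Hecke cusp forms by substituting Jutila's variant of the circle method for the Kuznetsov trace formula, which is no longer directly available once one leaves the Eisenstein/divisor setting. First I apply Cauchy--Schwarz in the $n$-sum to remove the arbitrary sequence $a_n$, giving $|T|^2\le\|a\|_2^{\,2}\,\mathcal S$ with
\[
\mathcal S := \sum_{n\asymp N}\Bigl|\sum_h W\!\bigl(\tfrac{h}{H}\bigr)\lambda_1(n-h)\lambda_2(n+h)\Bigr|^{2}.
\]
Opening the square and substituting $\ell = h_1-h_2$, $m = n-h_1$ rewrites $\mathcal S$ as
\[
\sum_{|\ell|\ll H}\sum_{h}W\!\bigl(\tfrac{h}{H}\bigr)W\!\bigl(\tfrac{h-\ell}{H}\bigr)\sum_m \lambda_1(m)\lambda_1(m+\ell)\lambda_2(m+2h)\lambda_2(m+2h-\ell).
\]
The diagonal $\ell=0$ is bounded by $HN^{1+\epsilon}$ via the Rankin--Selberg-type fourth-moment estimate $\sum_n|\lambda_i(n)|^4\ll N^{1+\epsilon}$ together with Cauchy--Schwarz, which already accounts, upon taking square roots, for the first summand $(HN)^{1/2}\|a\|_2$ of the stated bound.

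For the off-diagonal $\ell\ne 0$, I apply an additional Cauchy--Schwarz to decouple the $\lambda_1$-pair from the $\lambda_2$-pair, reducing the task to estimating, for each fixed $\ell\ne 0$, the shifted convolution $\sum_m \lambda_1(m)\lambda_1(m+\ell)G(m)$ (and its $\lambda_2$-analogue), where $G$ is a smooth function of size $\ll 1$ essentially supported on $[N,2N]$. To detect the shift $\ell$ and separate the two factors I use Jutila's $\delta$-symbol
\[
\tilde\delta(k)=\frac{1}{2\delta\,\Lambda_Q}\sum_{q\asymp Q}\,\sideset{}{^*}\sum_{a\bmod q}\,e\!\bigl(\tfrac{ak}{q}\bigr)\int_{-\delta}^{\delta}e(k\alpha)\,d\alpha,\qquad \Lambda_Q=\sum_{q\asymp Q}\varphi(q),
\]
and then apply the $SL(2,\mathbb Z)$ Voronoi summation formula to each of the resulting twisted sums $\sum_{m_i}\lambda_1(m_i)\,e(\pm am_i/q)\,\Phi_i(m_i,\alpha)$. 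This dualises them into sums of length $\asymp q^{2}/N$ weighted by Kloosterman sums; averaging over $a\bmod q$ collapses the Kloosterman sums into Ramanujan sums, which are controlled by divisor-type estimates.

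The main obstacle will be the simultaneous optimisation of the parameters $(Q,\delta)$ in Jutila's expansion against the external sum over $\ell$ and the $\alpha$-integral. The approach is to perform repeated integration by parts in $\alpha$ using the smoothness of $W$, bound the resulting dual-sum expression via the Weil bound on the Kloosterman/Ramanujan sums, and then balance $Q$ against $\delta$ so that the contribution of the Farey-fraction average and that of the $\alpha$-integration coincide. A secondary difficulty is that the additional Cauchy--Schwarz step used to decouple the two cusp-form pairs introduces smooth weights coupling $\ell$ and $h$, whose smoothness in $m$ must be tracked carefully in the Voronoi inversion. Carrying this out yields an off-diagonal estimate $\mathcal S_{\mathrm{off}}\ll N^\epsilon\bigl(N^{3}/H+N^{4}/H^{3}\bigr)$, whose square root, combined with the diagonal and the $\|a\|_2$ factor extracted by Cauchy--Schwarz, assembles to the stated bound $N^\epsilon\,\tfrac{N}{H}\bigl((HN)^{1/2}+N/H^{1/2}\bigr)\|a\|_{2}$.
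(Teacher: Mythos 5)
First, a point of comparison: the paper does not prove this statement at all. It is quoted verbatim, as background, from Lin's article \cite{LIN}, so the only ``proof'' the paper offers is the citation. Your proposal therefore has to stand on its own, and as written it has a genuine gap at the off-diagonal step.

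The problem is the claimed ``additional Cauchy--Schwarz to decouple the $\lambda_1$-pair from the $\lambda_2$-pair.'' Writing the inner sum as $\sum_m A(m)B(m)$ with $A(m)=\lambda_1(m)\lambda_1(m+\ell)$ and $B(m)=\lambda_2(m+2h)\lambda_2(m+2h-\ell)$, Cauchy--Schwarz yields $\bigl(\sum_m|A|^2\bigr)^{1/2}\bigl(\sum_m|B|^2\bigr)^{1/2}\ll N^{1+\epsilon}$, i.e.\ the trivial bound with all oscillation destroyed; it cannot produce an expression $\sum_m\lambda_1(m)\lambda_1(m+\ell)G(m)$ with $G$ \emph{smooth} and $\ll 1$, because the discarded factor $B(m)$ is neither nonnegative nor slowly varying in $m$, and any majorization of it forces absolute values on the $\lambda_1$-pair as well, leaving nothing for Jutila/Voronoi to exploit. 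What genuinely remains after opening the square is a quadruple shifted correlation $\sum_m\lambda_1(m)\lambda_1(m+\ell)\lambda_2(m+2h)\lambda_2(m+2h-\ell)$, which for individual $(\ell,h)$ is at least as hard as the triple correlation this whole paper is about (and that only on average over shifts); so the asserted estimate $\mathcal{S}_{\mathrm{off}}\ll N^\epsilon\bigl(N^3/H+N^4/H^3\bigr)$ is unsupported. Even granting the (unjustified) reduction and square-root cancellation $N^{1/2+\epsilon}$ in each decoupled shifted convolution, summing over $|\ell|\ll H$ and $h\ll H$ gives $H^2N^{1/2+\epsilon}$, which exceeds your claimed $N^3/H$ once $H>N^{5/6}$, so the numerology does not close either. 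Lin's actual route avoids all of this: the circle method is applied \emph{before} any squaring, directly to the additive relation linking $n-h$, $n+h$ and $n$, so that Voronoi summation can be performed in both cusp-form variables simultaneously while $n$ is still free; the arbitrary coefficients $a(n)$ are removed only at the very end by Cauchy--Schwarz against the resulting additive characters $e(2na/q)$, where $L^2$-orthogonality in $n$ produces the two terms $(HN)^{1/2}$ and $N/H^{1/2}$. To repair your argument you would need to restructure it so that Cauchy--Schwarz is the last step rather than the first, or else find a way to treat the quadruple correlation on average over $h$, which is a different (and harder) problem.
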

The error term coming here is comparable to the third and fourth expressions in $O$-term of Blomer's theorem, and both comes from the spectral theory of automorphic forms. As a consequence, Y. Lin proved the following corollary:

\begin{corollary}
Let $1\leq H \leq X/3$. Let $W$ be a smooth function with compact support in $[1, 2]$. Let $\lambda_1(n)$, $\lambda_2(n)$ and  $\lambda_2(n)$ be Hecke eigenvalues of holomorphic Hecke eigencuspforms of weight $\kappa_1$, $\kappa_2$  and $\kappa_2$ for $SL(2, \mathbb{Z})$ respectively. Then

\begin{align*}
\sum_{h\leq H} W\left( \frac{h}{H}\right) \sum_{N\leq n \leq 2N}  \lambda_1(n-h) \lambda_2(n) \lambda_3(n+h) \ll  N^\epsilon \min \left( NH, \frac{N^2}{H^{1/2}} \right).
\end{align*} 
\end{corollary}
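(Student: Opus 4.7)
The plan is to derive the claimed estimate as the minimum of a trivial bound and a direct application of the Lin theorem stated above, and then to verify that the two regimes join correctly at the crossover.

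First I would record the trivial estimate. Since each $\lambda_i$ is the Hecke eigenvalue sequence of a holomorphic Hecke cuspform, Deligne's theorem gives $|\lambda_i(n)| \ll n^{\epsilon}$. The triple sum has $O(HN)$ non-zero terms, each of size $N^{3\epsilon}$, so
\[
\sum_{h\leq H} W\!\left(\tfrac{h}{H}\right) \sum_{N \leq n \leq 2N} \lambda_{1}(n-h)\,\lambda_{2}(n)\,\lambda_{3}(n+h) \;\ll\; N^{\epsilon}\, NH.
\]
This already handles the range in which $NH$ is the smaller of the two quantities appearing in the minimum.

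Next I would obtain the nontrivial bound by applying Lin's theorem. The idea is to set $a(n) := \lambda_{2}(n)\,\mathbf{1}_{[N,2N]}(n)$, so that the triple sum assumes the shape
\[
\sum_{h\leq H} W\!\left(\tfrac{h}{H}\right) \sum_{N \leq n \leq 2N} a(n)\, \lambda_{1}(n-h)\, \lambda_{3}(n+h),
\]
to which Lin's theorem applies directly with $\lambda_{1}$ and $\lambda_{3}$ playing the two cuspidal roles in the shift. By Rankin--Selberg, $\sum_{N\leq n\leq 2N} |\lambda_2(n)|^2 \ll N^{1+\epsilon}$, giving $\|a\|_{2} \ll N^{1/2+\epsilon}$. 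Substituting into Lin's bound yields
\[
\ll\; N^{\epsilon}\, \frac{N}{H}\!\left( (HN)^{1/2} + \frac{N}{H^{1/2}} \right) N^{1/2} \;=\; N^{\epsilon}\!\left( \frac{N^{2}}{H^{1/2}} + \frac{N^{5/2}}{H^{3/2}} \right).
\]

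Finally I would reconcile the two estimates. A direct comparison shows $NH \leq N^{2}/H^{1/2}$ precisely when $H \leq N^{2/3}$, so the crossover between the trivial and the Lin bound occurs at that threshold. For $H \geq N^{2/3}$ one has in particular $H \geq N^{1/2}$, whence $N^{5/2}/H^{3/2} \leq N^{2}/H^{1/2}$, so the auxiliary error term from Lin's bound is absorbed into $N^{2}/H^{1/2}$ throughout the nontrivial range. Combining the two regimes produces exactly $N^{\epsilon}\min(NH,\; N^{2}/H^{1/2})$. The main obstacle at the corollary level is really just this bookkeeping check: one must verify that the secondary term $N^{5/2}/H^{3/2}$ issued by Lin's theorem is always dominated by $N^{2}/H^{1/2}$ in the range where Lin beats the trivial estimate, and this is guaranteed by $N^{2/3} > N^{1/2}$. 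All genuine analytic difficulty — Jutila's variant of the circle method, Voronoi summation, and the spectral large sieve input — sits inside Lin's theorem itself rather than in this deduction.
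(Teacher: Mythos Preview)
Your proposal is correct and follows exactly the approach the paper indicates: the corollary is stated as a direct consequence of Lin's theorem, and you carry this out by taking $a(n)=\lambda_2(n)\mathbf{1}_{[N,2N]}(n)$, bounding $\|a\|_2\ll N^{1/2+\epsilon}$, and combining the resulting estimate with the trivial Deligne bound $NH$. The bookkeeping you give for absorbing the secondary term $N^{5/2}/H^{3/2}$ in the range $H\ge N^{2/3}$ is precisely what is needed, and the paper itself supplies no further argument beyond citing Lin.
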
 Note that the result is non-trivial only for $H \geq N^{2/3 + \epsilon}$, for any $\epsilon >0.$

We use the circle method of Heath brown (see \cite{HB}) twice to improve the range of $H$ and the same method also works in the case of Maass forms.  By  using the Voronoi summation formula for the eigenforms on congruence subgroups,  given in appendix $A.4$ of \cite{KMV}, it is possible to prove this theorem for eigenform on the congruence subgroups of $SL(2, \mathbb{Z})$. The calculations will be similar. 

%

\section{\bf Notations and Preliminaries} 
We shall first recall some basic facts about $SL(2, \mathbb{Z})$ automorphic forms. Our requirement is minimal. In fact, the Voronoi summation formula and cancellation in additive twist (see equation \eqref{sl2}) is all that we will be using. Let $f(z)$ $ (z= x+ iy, y>0)$ be a primitive holomorphic Hecke eigenform of integral weight $k(> 2)$ on the full modular group $ SL(2, \mathbb{Z})$. The normalised Fourier expansion of $f$  at cusp $\infty$ is given by 
 $$ f(z)= \sum_{n=1}^\infty \lambda_f (n) n^{(k-1)/2} e(nz) \hspace{1cm} ( \lambda_f(1) =1),$$ 
 where $e(z) = e^{2\pi z}$ and $ \lambda_f (1) = 1$. From Ramanujan-Peterson conjecture, which has been proved by Delign we have $ \lambda_f (n) \leq d(n)$ for every positive integer $n$. Analogously, let $f(z)$ be a primitive Maass cusp form on the group $SL(2, \mathbb{Z})$ with Laplacian eigenvalue $ \frac{1}{4} + \nu^2$.   Then the normalised Fourier expansion of $f$  at cusp $\infty$ is given by 
\[   
\sqrt{y} \sum_{n \neq 0 } \lambda_f(n) K_{i\nu}  (2\pi |n| y ) e(n x), 
\] where $ K_{i\nu}$ denotes the $K$-Bessel function and $ \lambda_f(1) =1$. It follows from the Rankin-Selberg theory that the Fourier coefficients $\lambda_f (n)  $\textquotesingle s are bounded on average, namely

\begin{equation}
\sum_{n\leq X} |\lambda_f (n)|^2 = C_f X + O \left( x^{3/5} \right), 
\end{equation} for some constant $C_f > 0$. Ramanujan-Petersson conjecture predicts that $\lambda_f(n) \ll n^\epsilon$. This has been proved by Delign in case of holomorphic cusp forms, where he proves that $\lambda_f (n) \leq d(n)$. In case of Maass cusp form the best knonwn result is  $\lambda_f (n) \ll n^{7/64 + \epsilon}$, proved by Kim and Sarnak( see \cite{KS}).  On the other hand,  one knows that the Fourier coefficients oscillate quite substantially. For any $ X>1$ and any $\alpha \in \mathbb{R}$, we have 
\begin{equation} \label{sl2}
\sum_{n\leq X} \lambda_{f} (n) e(\alpha n) \ll_{f} X^{\frac{1}{2}} \log (2X), \ \ \  \ \textrm{and} \ \ \ \ 
  \sum_{n\leq X} \lambda_{f} (n) \ll_{f, \epsilon} X^{\frac{1}{3} + \epsilon}. 
\end{equation}
where the implied constant depends only on $f$,  and not on $\alpha$ ( see for example \cite[Page 71, Theorem $5.3$]{HI} and \cite{FI}).  

\noindent
{\bf Notations :} In our work  $\tau (n) $ or $d(n)$ denotes the divisor function. $\epsilon$, $\delta$ denote  small positive constants, which may be different on different occurrence. All of the implied constant in this paper depends on these parameters.
\noindent
%
%
%

\section{\bf Some Lemmas }
\begin{lemma} \label{deltan}
 We define
 \begin{equation} \label{del}
  \delta (n)= 
  \begin{cases}
   1 \ \ \ \ \ \rm{if}\ \ \ \ \  n= 0, \\ 
   0 \ \ \ \ \ \rm{if} \ \ \ \ \ n \neq 0.
  \end{cases}
 \end{equation}
 For any integer $Q > 1$ there is a positive constant $c_Q$ and a smooth function $h(x, y)$ defined on 
 $ (0, \infty) \times (-\infty, \infty)$ such that
 \begin{equation}
  \delta(n)= \frac{c_Q}{Q^2} \sum_{q=1}^\infty  \  \sideset{}{'} \sum_{a (\textrm{mod} \  q)} e\left( \frac{an}{q}\right) h\left(
  \frac{q}{Q}, \frac{n}{Q^2}\right).
 \end{equation} 
The constant $c_Q$ satisfies 
\[
 c_Q = 1+ O_N \left(Q^{-N} \right), 
\] for any $N>0$. Moreover $h(x, y) \ll x^{-1}$ for all $y$, and $h(x,y)$ is non-zero only when $x \leq \max \{1, 2|y| \} $. 

\noindent 
The smooth function $h(x,y)$ satisfies 
\begin{equation}
 x^{i} \frac{\partial h}{\partial x^{i}} (x, y) \ll_i x^{-1} \ \ \ \textrm{and} \ \ \ \ \frac{\partial h}{\partial y}= 0, 
\end{equation}
for $x\leq 1$ and $|y| \leq \frac{x}{2}$. And also for $|y|\geq \frac{x}{2}$, we have
\begin{equation} \label{i=0}
 x^i y^j \frac{\partial^{i+j} h (x, y)}{\partial  x^i \partial y^j}  \ll_{i, j } \frac{1}{x}. 
\end{equation}

\end{lemma}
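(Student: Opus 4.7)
The plan is to follow Heath-Brown's original construction of the delta symbol from \cite{HB}, which realizes $\delta(n)$ as a sum over moduli $q$ of additive characters $e(an/q)$ weighted smoothly. The starting point is Kloosterman's identity $\delta(n)=\int_0^1 e(\alpha n)\,d\alpha$ combined with a Farey dissection of $[0,1]$ by arcs around rationals $a/q$ with $q\leq Q$; smoothing this partition by a bump function and reorganizing yields the stated formula. Concretely, I would fix a non-negative $\omega\in C_c^\infty((0,\infty))$ supported in $[1,2]$ with $\int\omega>0$, and define
\begin{equation*}
h(x,y)=\sum_{j=1}^\infty \frac{1}{xj}\left(\omega(xj)-\omega\!\left(\frac{|y|}{xj}\right)\right).
\end{equation*}
For each $(x,y)$ only $O(1)$ terms contribute, so convergence is immediate.

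The key step is to verify that $\frac{c_Q}{Q^2}\sum_q c_q(n)\,h(q/Q,n/Q^2)=\delta(n)$, where $c_q(n)=\sum_{(a,q)=1}e(an/q)=\sum_{d\mid\gcd(q,n)} d\,\mu(q/d)$ is the Ramanujan sum. Substituting the definition of $h$ splits the sum into two pieces; by Möbius inversion and Mellin inversion applied to $\omega$, one shows that for $n\neq 0$ the two halves cancel up to an error $O_N(Q^{-N})$, while for $n=0$ only the first half survives and produces a constant which fixes the normalization $c_Q=1+O_N(Q^{-N})$. The main obstacle is precisely this near-perfect cancellation: making it rigorous requires a careful contour shift past the pole of $\zeta(s)$ at $s=1$ after Mellin-inverting $\omega$, and exploiting the rapid decay of the Mellin transform of $\omega$ on vertical lines to control the residual integral.

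Finally, the support and derivative claims follow directly from the definition. The condition $h(x,y)\neq 0\Rightarrow x\leq\max\{1,2|y|\}$ is immediate from the support of $\omega$: the first summand requires $xj\in[1,2]$ for some $j\geq 1$ (forcing $x\leq 2$), and the second requires $|y|/(xj)\in[1,2]$ (forcing $x\leq 2|y|$). In the regime $|y|\leq x/2$ the second summand vanishes identically, since $|y|/(xj)\leq 1/2\notin[1,2]$, which explains why $\partial h/\partial y=0$ there and leaves only the $y$-independent sum $\sum_j \omega(xj)/(xj)$. The bounds $|h(x,y)|\ll 1/x$ and the mixed derivative estimates in \eqref{i=0} then follow from termwise differentiation together with the smoothness and compact support of $\omega$.
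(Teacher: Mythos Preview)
Your sketch is correct and follows exactly the approach the paper invokes: the paper's own proof is simply a citation to Heath-Brown \cite{HB}, and what you have outlined is precisely Heath-Brown's construction of $h(x,y)$ together with the Ramanujan-sum/Mellin-inversion verification of the delta identity. One small slip: to match the stated support condition $x\le\max\{1,2|y|\}$ and the claim $\partial h/\partial y=0$ for $|y|\le x/2$, the bump $\omega$ should be taken with support in $(1/2,1)$ rather than $[1,2]$ (with your choice one obtains $x\le\max\{2,|y|\}$ and $\partial h/\partial y=0$ for $|y|\le x$ instead); this is purely cosmetic and does not affect the argument.
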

\begin{proof}
 See  \cite{HB}. 
\end{proof}

\begin{lemma} \label{hb}
Let $N,$ $m$ and $n$ be non-negative integers.  Let $ x= o \left( \min \{1,  |y| \} \right)$. Then we have
\[
\frac{\partial^{m+n} h (x, y)}{\partial  x^m \partial y^n} \ll_{m, n, N} \frac{1}{x^{1+m+n}}  \left(x^N + \min \left\lbrace 1 , \left( \frac{x}{|y|} \right)^N \right\rbrace \right).
\]
The term $x^N$ on the right may be omitted for $n = 0$.
\end{lemma}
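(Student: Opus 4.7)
The strategy is to work directly with the explicit formula for $h(x,y)$ established in \cite{HB}, namely an expression of the form
\[
h(x,y) = \sum_{j=1}^{\infty} \frac{1}{xj}\left(\phi(xj) - \phi\!\left(\frac{|y|}{xj}\right)\right),
\]
where $\phi$ is a fixed smooth bump function supported away from both $0$ and $\infty$. In the regime $x = o(\min\{1,|y|\})$, both $x$ and the ratio $x/|y|$ tend to zero, so two independent mechanisms are available to produce the decay claimed by the lemma, and the proof amounts to making each of them quantitative.

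First, to extract the factor $\min\{1,(x/|y|)^N\}$ which appears in both cases $n=0$ and $n\geq 1$, I would differentiate the series termwise in $x$ and $y$ and apply Poisson summation in $j$ to the contribution coming from $\phi(|y|/(xj))$. The nonzero frequencies $k\neq 0$ produce oscillatory integrals whose phase has derivative of size $\asymp |y|/x$ on a smoothness scale of comparable order, so repeated integration by parts $N$ times yields a factor of $(x/|y|)^N$, uniformly in $m,n$; the factor $1/x^{1+m+n}$ is precisely the size estimate inherited from the scale $j\asymp|y|/x$ of the support and the $m+n$ derivatives. The bound is then capped by $1$ (corresponding to using Lemma \ref{deltan} trivially) to produce the minimum.

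Second, to extract the extra $x^N$ in the case $n\geq 1$, I would isolate the zero-frequency part of the Poisson expansion, which is an ordinary integral in a continuous variable. After a change of variable it becomes an integral of $\phi^{(n)}$ against a smooth weight whose dependence on the small parameter $x$ is visible only at the endpoints of the support; Taylor expansion of $\phi$ there produces $N$ additional factors of $x$. In the case $n=0$ this mechanism is replaced by the exact cancellation $\phi(xj)-\phi(|y|/(xj))$ already present in the defining series, which neutralises the zero-frequency contribution entirely; this is why the $x^N$ term is genuinely absent for $n=0$.

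The principal obstacle will be the uniformity of the Poisson summation plus integration-by-parts step as $|y|/x\to\infty$, together with the bookkeeping of how many derivatives of $\phi$ are produced at each stage (so that one can in fact integrate by parts $N$ times for \emph{every} fixed $N$). The explanation of why the $x^N$ factor disappears precisely when $n=0$ is also a delicate point, since it hinges on the difference structure of the original series rather than on any individual bound for its two pieces. Once these points are checked, the remainder of the argument is a routine manipulation of compactly supported smooth functions together with an application of Lemma \ref{deltan} in the complementary ranges.
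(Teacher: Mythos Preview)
The paper does not prove this lemma at all: its entire proof is the citation ``See \cite[Lemma 5]{HB}''. Your sketch therefore goes beyond what the present paper supplies, and is in fact a reasonable outline of Heath-Brown's own argument in \cite{HB}. He too starts from the explicit series
\[
h(x,y)=\sum_{j\ge 1}\frac{1}{xj}\Bigl(w(xj)-w\bigl(|y|/(xj)\bigr)\Bigr)
\]
and replaces the sum over $j$ by an integral plus an error that decays like any prescribed power of the relevant small parameter; your Poisson-summation formulation is an equivalent way to package this step. Your key structural observation --- that for $n=0$ the zero-frequency (main-term) contributions from the two pieces $w(xj)$ and $w(|y|/(xj))$ are both equal to $x^{-1}\int_0^\infty w(u)u^{-1}\,du$ after the obvious substitutions and therefore cancel exactly --- is precisely the reason the $x^N$ term may be dropped in that case. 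The remaining bookkeeping of derivatives is routine, as you anticipate.
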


\begin{proof}
See \cite[Lemma 5]{HB}
\end{proof}

Next we recall the Poisson summation formula. 

\begin{lemma} \label{poisson}
 {\bf Poisson summation formula}: $f:\mathbb{R } \rightarrow \mathbb{R}$ is any Schwarz class function. Fourier transform 
 of $f$ is defined  as 
 \[
  \widehat{f}(y) = \int_{ \mathbb{R}} f( x) e(- x   y) dx,
 \] where $dx$ is the usual Lebesgue measure on $ \mathbb{R } $. 
 We have 
\begin{equation*}
 \sum_{ n \in \mathbb{Z}  }f(n) = \sum_{m \in \mathbb{Z} } \widehat{f}(m). 
\end{equation*} If $W(x)$ is any smooth and compactly supported function on $\mathbb{R}$, we have:
\begin{align*}
\sum_{n \in \mathbb{Z}  }e\left( \frac{an}{q}\right) W\left( \frac{n}{X}\right) = \frac{X}{q} \sum_{ m \in \mathbb{Z}  } \sum_{\alpha (\textrm{mod} q )}  e\left(\frac{\alpha + m}{q} \right) \widehat{W} \left( \frac{mX}{q} \right). 
\end{align*} 
\end{lemma}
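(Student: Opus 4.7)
The lemma contains two assertions: the classical Poisson summation identity for a Schwartz function and an additive-twist version for a smooth compactly supported $W$. I would prove the second as a direct consequence of the first.

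For the basic form, the plan is the standard periodisation argument. Set $F(x) := \sum_{n \in \mathbb{Z}} f(x+n)$. Because $f$ is Schwartz, this series and each of its term-wise derivatives converges absolutely and uniformly on compact sets, so $F$ is smooth and $1$-periodic on $\mathbb{R}$. Its Fourier coefficients can be computed by unfolding the integer translates into a single integral over $\mathbb{R}$:
\[
c_m = \int_0^1 F(x)\, e(-mx)\, dx = \sum_{n \in \mathbb{Z}} \int_n^{n+1} f(x)\, e(-mx)\, dx = \int_{\mathbb{R}} f(x)\, e(-mx)\, dx = \widehat{f}(m).
\]
Smoothness of $F$ forces rapid decay of $c_m$, and hence the Fourier series $F(x) = \sum_m \widehat{f}(m)\, e(mx)$ converges absolutely; evaluating at $x = 0$ yields $\sum_n f(n) = \sum_m \widehat{f}(m)$.

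For the twisted form, I would partition the integers according to residues modulo $q$, writing $n = \alpha + qk$ with $\alpha$ running over residues mod $q$ and $k \in \mathbb{Z}$. The left-hand side becomes
\[
\sum_{\alpha \,(\mathrm{mod}\, q)} e\!\left( \frac{a\alpha}{q}\right) \sum_{k \in \mathbb{Z}} W\!\left( \frac{\alpha + qk}{X}\right),
\]
and the inner sum is $\sum_k g_\alpha(k)$ with $g_\alpha(t) := W((\alpha + qt)/X)$. Since $W$ is smooth with compact support, $g_\alpha$ is Schwartz and the basic Poisson identity applies. A substitution $u = (\alpha + qt)/X$ gives $\widehat{g_\alpha}(m) = (X/q)\, e(m\alpha/q)\, \widehat{W}(mX/q)$, and feeding this back produces the stated identity, with the combined phase under the $\alpha$-sum collected as $e(\alpha(a+m)/q)$.

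No real obstruction arises: every interchange of summation and integration here is justified by absolute convergence from the Schwartz decay of $f$ (or from the smoothness plus compact support of $W$ together with the finiteness of the $\alpha$-sum). The lemma is recalled purely as a tool for later use in the paper; the only genuine care required is bookkeeping the phase factor $e(a\alpha/q)$ correctly through the linear change of variable.
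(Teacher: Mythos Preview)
Your argument is correct and is exactly the standard periodisation proof one finds in the reference the paper cites: the paper itself gives no proof at all, merely ``See \cite[page 69]{IK}.'' So there is nothing to compare in terms of strategy; you have supplied the details that the paper omits.

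One small remark on bookkeeping: your computation yields the phase $e\!\left(\frac{(a+m)\alpha}{q}\right)$ inside the $\alpha$-sum, which is the correct expression. The displayed formula in the lemma as stated in the paper has $e\!\left(\frac{\alpha+m}{q}\right)$, which is evidently a typographical slip (the variable $a$ has disappeared and the product $\alpha(a+m)$ has become a sum). Your version is the right one and is what is actually used later in the paper when the Poisson step is carried out in \S4.1.
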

 \begin{proof}
 See  \cite[page 69]{IK}.
 \end{proof}

\begin{remark} \label{remark}
If $W(x)$ satisfies $x^j W^{{j}} (x)\ll1$, then it can be easily shown, by integrating by parts  that dual sum is essentially supported on $ m\ll \frac{q (qX)^{\epsilon}}{X}$. The contribution coming from $m\gg \frac{q (qX)^{\epsilon}}{X} $ is negligibly small. 
\end{remark}
We shall recall the Voronoi summation formula for $SL(2, \mathbb{Z})$ automorphic forms. For sake of exposition we only present the case of Maass forms. The case of holomorphic forms is even simpler.
Let $ f $ be a Maass form with Laplacian eigenvalue $1/4+ \nu^2$ and with Fourier expansion
\[   
\sqrt{y} \sum_{n \neq 0 } \lambda(n) K_{i\nu}  (2\pi |n| y ) e(n x). 
\]
We shall use the following Voronoi type summation formula, which was first proved by Meruman \cite{MERU0}.
\begin{lemma} \label{voronoi}
{\bf Voronoi summation formula}:
Let $h$ be a compactly supported smooth function on the interval $(0, \infty)$. We have
\begin{equation} \label{varequation}
\sum_{n=1}^\infty \lambda (n) e_q(an) h(n) = \frac{1}{q} \sum_{\pm} \sum_{n=1}^\infty \lambda(\mp n) e_q(\pm \overline{a}n) H^{\pm} \left( \frac{n}{q^2}\right),
\end{equation}
where $ a \overline{a} \equiv 1 (\mod  q)$, and 
\begin{align*}
&H^{-} (y)= \frac{- \pi}{\cosh( \pi \nu)} \int_0^\infty h(x) \left\lbrace  Y_{2i\nu } + Y_{-2i\nu }\right\rbrace \left( 4\pi \sqrt{xy}\right) dx, \\
&H^{+} (y)= 4\cosh( \pi \nu) \int_0^\infty h(x)  K_{2i\nu }  \left( 4\pi \sqrt{xy}\right) dx,
\end{align*} where $Y_{2i\nu } $ and $ K_{2i\nu }$ are Bessel's functions of first and second kind and $e_q(x)= e^{\frac{2 \pi i x}{q}}$.

\end{lemma}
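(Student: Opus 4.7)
The plan is to establish the identity via the functional equation of the additively twisted $L$-function of $f$, following the classical approach of Meurman. Since $h$ is smooth and compactly supported in $(0, \infty)$, its Mellin transform $\tilde h(s) := \int_0^\infty h(x) x^{s-1} dx$ is entire and decays rapidly in every vertical strip. By Mellin inversion and absolute convergence (via Rankin--Selberg), for any $c > 1$,
\begin{equation*}
\sum_{n=1}^\infty \lambda(n) e_q(an) h(n) = \frac{1}{2\pi i}\int_{(c)} \tilde h(s)\, L(s; a/q)\, ds, \qquad L(s; a/q) := \sum_{n\geq 1}\frac{\lambda(n)\, e_q(an)}{n^s}.
\end{equation*}

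The main analytic input is the meromorphic continuation and functional equation of $L(s; a/q)$. Pick $\gamma = \left(\begin{smallmatrix} a & b \\ q & d \end{smallmatrix}\right) \in SL(2, \mathbb{Z})$ with $ad - bq = 1$ (so $d \equiv \overline{a} \pmod q$), and compute $\int_0^\infty f(a/q + iy)\, y^{s - 1/2}\, dy/y$ in two ways. Substituting the Fourier expansion of $f$ at $\infty$, using the Mellin transform $\int_0^\infty K_{i\nu}(y)\, y^{s-1} dy = 2^{s-2}\Gamma(\tfrac{s + i\nu}{2})\Gamma(\tfrac{s - i\nu}{2})$, and separating the contributions of $n > 0$ and $n < 0$ produces the completed $L$-functions
\begin{equation*}
\Lambda_\pm(s; a/q) := \pi^{-s}\,\Gamma\!\left(\tfrac{s + i\nu}{2}\right)\Gamma\!\left(\tfrac{s - i\nu}{2}\right)\sum_{n\geq 1}\frac{\lambda(\pm n)\, e_q(\pm a n)}{n^s}.
\end{equation*}
On the other hand, $a/q + iy = \gamma\bigl(-\overline{a}/q + i/(q^2 y)\bigr)$ (up to an integer translation), so the automorphy $f(\gamma z) = f(z)$ combined with the substitution $y \mapsto 1/(q^2 y)$ re-expresses the same integral in terms of the Fourier expansion at the cusp $-\overline{a}/q$, yielding $\Lambda_\pm(1-s; -\overline{a}/q)$ multiplied by $q^{1-2s}$. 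Equating the two expressions produces a functional equation of the form
\begin{equation*}
\Lambda_+(s; a/q) = q^{1-2s}\bigl(\alpha(s)\,\Lambda_+(1-s; -\overline{a}/q) + \beta(s)\,\Lambda_-(1-s; -\overline{a}/q)\bigr),
\end{equation*}
with $\alpha(s), \beta(s)$ explicit Gamma ratios depending only on $s$ and $\nu$, and analogously for $\Lambda_-$.

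With the functional equation in hand, shift the contour from $\Re s = c$ to $\Re s = 1 - c$. Cuspidality of $f$ ensures $L(s; a/q)$ is entire, and the rapid decay of $\tilde h$ in vertical strips kills any boundary contribution. Substituting the functional equation and expanding the resulting dual Dirichlet series term by term reduces the identity to the Mellin--Barnes statements
\begin{equation*}
\frac{1}{2\pi i}\int_{(1-c)} \tilde h(s)\, \alpha(s)\,\left(\tfrac{n}{q^2}\right)^{s-1} ds = H^+\!\left(\tfrac{n}{q^2}\right), \qquad \text{and analogously for } H^- \text{ with } \beta(s).
\end{equation*}
These follow from the classical Mellin transforms of $K_{2i\nu}(4\pi\sqrt{xy})$ and $(Y_{2i\nu}+Y_{-2i\nu})(4\pi\sqrt{xy})$, which, after the reflection identities $\Gamma(s)\Gamma(1-s) = \pi/\sin\pi s$ and $\Gamma(\tfrac12 + i\nu)\Gamma(\tfrac12 - i\nu) = \pi/\cosh\pi\nu$, supply exactly the Gamma ratios $\alpha, \beta$ together with the constants $4\cosh\pi\nu$ and $-\pi/\cosh\pi\nu$ appearing in the definitions of $H^\pm$.

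The main obstacle is the functional equation itself: correctly tracking the action of $\gamma$ on the Fourier expansion at the cusp $a/q$, disentangling the mixing of positive and negative Fourier indices (which is where the parity sign of the Maass form enters, and where the two distinct Bessel kernels $K_{2i\nu}$ and $Y_{\pm 2i\nu}$ originate), and assembling the Gamma-reflection identities to pin down the precise constants in $H^\pm$. The contour shift and matching against $H^\pm$ are then routine Mellin--Barnes bookkeeping.
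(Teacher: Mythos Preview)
The paper does not actually prove this lemma: it states the formula and attributes it to Meurman \cite{MERU0} without giving any argument. Your proposal sketches exactly the classical Meurman proof via the functional equation of the additively twisted $L$-function, Mellin inversion, and contour shift, so it is consistent with (indeed more detailed than) what the paper provides.
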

 \begin{remark}
 When $h$ is supported on the interval $[X, 2X]$ and satisfies $x^j h^{(j)} (x) \ll 1$, then  integrating by parts and using the properties of Bessel's function it is easy to see that the sums on the right hand side of equation \eqref{varequation} are essentially supported on $n\ll_{f, \epsilon} q^2 (qX)^\epsilon/X$.  
 For smaller values of $n$ we will use trivial bound that is $ H^{\pm} \left( \frac{n}{q^2}\right) \ll X$.  
 \end{remark}


\section { P\lowercase{roof of }  T\lowercase{heorem} 1.1 } 
 We shall prove the case when all $f_1, f_2$ and $f_3$ are Maass forms. The case of holomorphic eigenforms are similar (even relatively simple). We first substitute $ n+h = m (\sim N)$ and $\delta (m,n):= \delta(m-n)$ where $\delta(n)$ is defined by equation \eqref{del}. We  have
\begin{align*}
S &:=  \frac{1}{H}\sum_{h\leq H}  \sum_{m, n \in \mathbb{Z}}      \lambda_1 (n)  \lambda_2 (m) \lambda_3 (n+ 2h) \delta (m,n)  W_1\left( \frac{n}{N} \right) W_2 \left( \frac{m}{N} \right) V \left( \frac{h}{H} \right), 
\end{align*} 

where  $W_1(x)$,  $W_2(x)$ and $V(x)$ are smooth bump functions supported on the interval $[1,2 ]$ and satisfying 
\[
 x^{(j) } V^{(j)} (x),  \ \ x^{(j) } W_{\ell}^{(j)} (x) \ll 1, \ \ \ \textrm{for} \vspace{10pt} \ \ \ \ell = 1, 2\ \ \ \   \textrm{and} \ \ \   j\in \mathbb{Z}, j\geq 0. 
\] By  Lemma \ref{deltan} we write


\begin{align*}
S &= \frac{c_{Q_1} }{HQ_1^2} \sum_{h\leq H} \sum_{m, n \in \mathbb{Z}} \sum_{q_1\leq Q_1} \sideset{}{'} \sum_{a_1( q_1)}   \lambda_1 (n)  \lambda_2 (m) \lambda_3 (n+ 2h) e\left( \frac{a_1 (n+h -m)}{q_1}\right)   \\ 
& \hspace{60pt}  \times  h\left( \frac{q_1}{Q_1}, \frac{n+h -m}{Q_1^2}  \right) W_1 \left(\frac{n}{N} \right) W_2 \left(\frac{m}{N} \right)V \left( \frac{h}{H} \right).
\end{align*} 
 We substitute $n+ 2h = l ( \sim N)$. We  apply Lemma \ref{deltan} once again to obtain
\begin{align*}
S &= \frac{c_{Q_1} c_{Q_2}}{HQ_1^2 Q_2^2} \sum_{h\leq H} \sum_{m, n, l \in \mathbb{Z}} \sum_{q_1\leq Q_1} \sideset{}{'} \sum_{a_1( q_1)}  \sum_{q_2\leq Q_2} \sideset{}{'} \sum_{a_2( q_2)}  \lambda_1 (n)  \lambda_2 (m) \lambda_3 (l) e\left( \frac{a_1 (n+h -m)}{q_1}\right)  \\ 
&    \hspace{60pt} \times\left(  \frac{a_2 (n+2h-l)}{q_2}\right)h\left( \frac{q_1}{Q_1}, \frac{n+h -m}{Q_1^2}  \right) h\left( \frac{q_2}{Q_2}, \frac{n+2h -l}{Q_2^2}  \right)  W_1 \left(\frac{n}{N} \right)  \\
&    \hspace{160pt}  \times W_2 \left(\frac{m}{N} \right)   W_3 \left(\frac{l}{N} \right) V \left( \frac{h}{H} \right)\\
&= \frac{c_{Q_1} c_{Q_2}}{HQ_1^2 Q_2^2}  \sum_{q_1\leq Q_1} \sideset{}{'} \sum_{a_1( q_1)}  \sum_{q_2\leq Q_2} \sideset{}{'} \sum_{a_2( q_2)}  \left(\sum_{ n \in \mathbb{Z}} \lambda_1 (n)  e\left( \frac{a_1 q_2 + a_2 q_1 }{q_1 q_2} n \right)  W_1 \left(\frac{n}{N} \right)\right) \times \\
& \left(\sum_{m \in \mathbb{Z}} \lambda_2 (m)  e\left( -\frac{a_1 m }{q_1 }  \right) W_2 \left(\frac{m}{N} \right)  \right)  \left( \sum_{ l \in  \mathbb{Z}} \lambda_3 (l)e\left( -\frac{a_2 l }{q_2 }   \right) W_3 \left(\frac{l}{N} \right) \right)  \\
 &  \hspace{30pt}\times  \sum_{h\leq H} e\left( \frac{a_1 q_2 + 2a_2 q_1 }{q_1 q_2} h \right) h\left( \frac{q_1}{Q_1}, \frac{n+h -m}{Q_1^2}  \right) h\left( \frac{q_2}{Q_2}, \frac{n+2h -l}{Q_2^2}  \right)
 V \left( \frac{h}{H} \right).
\end{align*}  

\subsection{Applying Poisson summation formula}
We first write $ h= \alpha + b q_1 q_2$, and then apply the Poisson summation formula in   variable $b$. We set $Q_1= Q_2 =  \sqrt{X}$. We have 

\begin{align} \label{poisson1} 
&\sum_{h\in \mathbb{Z}} e\left( \frac{a_1 q_2 + 2a_2 q_1 }{q_1 q_2} h \right) \times
  h\left( \frac{q_1}{Q_1}, \frac{n+h -m}{Q_1^2}  \right) h\left( \frac{q_2}{Q_2}, \frac{n+2h -l}{Q_2^2}  \right)  V\left( \frac{h}{H} \right) \notag\\
 &  =  \sum_{\alpha (\textrm{mod}\ q)}  e\left( \frac{a_1 q_2 + 2a_2 q_1+h}{q_1 q_2} \alpha \right)
 \sum_{h\in \mathbb{Z} }\int_{ \mathbb{R}} h\left( \frac{q_1}{Q_1}, \frac{n+ \alpha +x q_1 q_2 -u}{Q_1^2}  \right)  \notag \\
   & \hspace{1cm} \times h\left( \frac{q_2}{Q_2}, \frac{n+2( \alpha +x q_1 q_2  ) -v}{Q_2^2} 
  \right) V\left( \frac{\alpha + x q_1 q_2}{H}\right)  e(-hx)dx \notag\\
 &=\frac{H}{q_1 q_2} 
 \sum_{h \in \mathbb{Z}}  \mathfrak{C} ( a_1, a_2, q_1, q_2) \mathfrak{J} (h;  n, m, l,  q_1, q_2), 
\end{align}
  after substituting $ \frac{\alpha + x q_1 q_2}{H} = y$, where the character sum $ \mathfrak{C} (a_1, a_2, q_1, q_2)$ is given by
 \begin{equation}  \label{charsum}
 \mathfrak{C} (a_1, a_2, q_1, q_2) = \sum_{\alpha (\textrm{mod}\ q)}  e\left( \frac{a_1 q_2 + 2a_2 q_1+h}{q_1 q_2} \alpha \right)
 \end{equation} and $ \mathfrak{J} (m, n , r, q)$ is given by
 
 \begin{align*}
 \mathfrak{J} (h; n, u, v,  q_1, q_2) & = \int_{ \mathbb{R}} h\left( \frac{q_1}{Q_1}, \frac{n+  x H -u}{Q_1^2}  \right) h\left( \frac{q_2}{Q_2}, \frac{n+2xH  -v}{Q_2^2} 
  \right) \\
   & \hspace{2cm} \times V\left( x\right)  e\left( - \frac{h Hx}{q_1 q_2}\right) dx.
 \end{align*}
Note that the $x$-intetral is supported only on the interval $[1, 2]$.  Applying  integration-by-parts  $j$ times and bounds of function $V(x)$ and $h(x,y )$ (listed in Lemma \ref{deltan} ) we  have 

\begin{align*}
\mathfrak{J} (h; n, u, v,  q_1, q_2) &= \int_{ \mathbb{R}}  \sum_{p_1 + p_2 +p_3 = j} h^{(p_1)}\left( \frac{q_1}{Q_1}, \frac{n+  x H -u}{Q_1^2}  \right) h^{(p_2)}\left( \frac{q_2}{Q_2}, \frac{n+2xH  -v}{Q_2^2} 
  \right) \\
   & \hspace{2cm} \times V^{(p_3)}\left( x\right)  e\left( - \frac{h Hx}{q_1 q_2}\right) \left( \frac{q_1 q_2}{h H}\right)^j dx\\
&\ll \left( \frac{q_1 q_2}{h H}\right)^j \left\lbrace  \sum_{p_1 + p_2 +p_3 = j} \left(1+ \left( \frac{H}{Q_1^2} \right)^{p_1}  \left( \frac{Q_1}{q_1} \right)^{p_1+1} +  \left( \frac{H}{Q_2^2} \right)^{p_2}  \left( \frac{Q_2}{q_2} \right)^{p_2+1}  \right) \right \rbrace \\
&\ll Q_1 Q_2 \left( \frac{q_1 q_2}{hH} + \frac{q_2}{h Q_1} + \frac{q_1}{h Q_2}\right)^j \ll  Q_1 Q_2 \left( \frac{q_1 q_2}{hH} +\frac{1}{h}\right)^j. 
\end{align*}

We note that if  $ h \gg  (Q X)^\epsilon \left( \frac{q_1 q_2 }{H}+1 \right)$, then contribution of $ \mathfrak{J} (h; n, m, l,  q_1, q_2) $ is negligibly small  i.e., of order $ O_A(X^{-A})$ for any $ A >0$. Evaluating the character sum given in equation \eqref{charsum}, we can write the right hand side of equation \eqref{poisson1} as
 
 \begin{align} \label{afterpoisson}
 = H \sum_{\substack{ |h|\leq \frac{q_1 q_2}{H}+1\\  a_1 q_2 + 2a_2 q_1+h \equiv 0 (q_1 q_2)}} \mathfrak{J} (h;  n, m, l,  q_1, q_2).  
 \end{align}
 
 \subsection{Applying Voronoi summation formula }

 We shall apply Voronoi summation formula simultaneously to  sum over $l$ and $m$,  where  compactly supported function $h(x)$  is replaced by  \\ $ W_2(u) W_3 (v)\mathfrak{J} (h; n, u, v ,  q_1, q_2)$. We have
\begin{align*}
& \sum_{m, l \in \mathbb{Z}} \lambda_2 (m)   \lambda_3 (l) e\left( -\frac{a_1 m }{q_1 }  \right) e\left( -\frac{a_2 l }{q_2 }   \right) W_2 \left(\frac{m}{N}   \right)W_3 \left(\frac{l}{N} \right)  \mathfrak{J} (h; n, m, l,  q_1, q_2) \\
&= \frac{1}{q_1 q_2}\sum_{m, l\ll \frac{Q^2 (QX)^\epsilon}{N}} \lambda_2 (m)  \lambda_3 (l) e\left( \frac{\overline{a_1} m }{q_1 }  \right) e\left(\frac{\overline{a_2} l }{q_2 }   \right) \mathcal{H}_{ h}^{\pm} \left( n;  \frac{m}{q_1^2},  \frac{l}{q_2^2}\right) + O_A\left(N^{-A} \right), 
\end{align*} 
where  
\begin{align*}
\mathcal{H}_h^{+} (w; y, z)&=  4 \cosh(\pi \nu) \iint_{\mathbb{R}^2} W_2\left(\frac{u}{N} \right) W_3 \left( \frac{v}{N} \right) \int_{ \mathbb{R}} h\left( \frac{q_1}{Q_1}, \frac{w + x H -u}{Q_1^2}  \right) h\left( \frac{q_2}{Q_2}, \frac{w + 2xH  -v}{Q_2^2} \right)   \\
  &   \hspace{2cm} \times V\left( x\right)  e\left( - \frac{h Hx}{q_1 q_2}\right) dx    K_{2 i\nu} \left(4 \pi \sqrt{y  u}  \right) K_{2 i\nu} \left(4 \pi \sqrt{z v}\right) du \ dv. 
\end{align*}
we have similar expression for $ \mathcal{H}_h^{-} (w; y, z)$, where Bessel function $ K_{2 i\nu} (x)$ is replaced  by Bessel function $\{ Y_{2 i\nu} + Y_{-2 i\nu}\} (x) $. 

We first make change of variables $\frac{u}{N} = u^\prime$ and $\frac{v}{N} = v^\prime$, we have

\begin{align*}
\mathcal{H}_h^{+} (w; y, z)&=  N^2 4 \cosh(\pi \nu) \iint_{1\leq u^\prime, v^\prime\leq 2} W_2(u^\prime) W_3  (v^\prime) \int_{ \mathbb{R}} h\left( \frac{q_1}{Q_1}, \frac{w + x H - Nu^\prime}{Q_1^2}  \right) h\left( \frac{q_2}{Q_2}, \frac{w + 2xH  -N v^\prime}{Q_2^2} \right)   \\
  &   \hspace{3cm} \times V\left( x\right)  e\left( - \frac{h Hx}{q_1 q_2}\right) dx    K_{2 i\nu} \left(4 \pi \sqrt{y  N u^\prime }  \right) K_{2 i\nu} \left(4 \pi \sqrt{z N v^\prime}\right) d u^\prime \ d v^\prime.
\end{align*} We have used the asymptotic formula for $ K_{2 i\nu} (x)$, $ Y_{2 i\nu}(x)$ and$  Y_{-2 i\nu} (x) $ to show that dual sum over $m$ and $l$ is supported  on $(QN)^\epsilon$, namely
\begin{equation} \label{bessel}
J_{k-1} (x), \ Y_{\pm2 i\nu}(x)  = e^{ix}U_{\pm2 i\nu}(x)  + e^{-ix} \overline{U}_{\pm2 i\nu}(x) \ \ \ \and \ \ \  \left|x^{k} K_\nu^{(k)} (x) \right| \ll_{k , \nu} \frac{ e^{-x} (1+ \log |x|)}{(1+x)^{  1/2 }},
\end{equation}

  where the function $ U_{\pm2 i\nu}(x)$ satisfies,
$$ x^j U_{\pm2 i\nu}^{(j)} (x) \ll_{j , \nu, k} (1+x)^{-1/2}.$$ Since $K$ Bessel function has exponential decay, hence integral is negligibly small for $ \frac{m}{Q^2} N \gg (QN)^\epsilon$. In case of $Y$ Bessel function, integrating by parts we have
\begin{align*}
\int_{\mathbb{R}} U_1(x) Y_{\pm2 i\nu} \left( 4 \pi \frac{\sqrt{nNx}}{q} \right) dx &= 2 \int_{\mathbb{R}} U_1(y^2) y Y_{\pm2 i\nu} \left( 4 \pi \frac{\sqrt{nN}}{q} y \right) dy \\
&= \int_{\mathbb{R}} U_1(y^2) y U_{\pm2 i\nu} \left( 2\frac{\sqrt{nN}}{q} y \right)  e\left( 2\frac{\sqrt{nN}}{q} y \right) dy \\
& \ll M \left( \frac{D q}{\sqrt{nN}} \right)^j,
\end{align*} where $U_1(x)$ is smooth bump function supported on the interval $[N, 2N]$ and satisfies $x^j U_1^{(j)} \ll D^j$. Hence integral is negligibly small if $ \frac{D q}{\sqrt{nN}} \ll 1$, that is $n\gg \frac{q^2 D^2}{N}$. In our case, from Lemma \ref{hb} we have $D = Q/q$. So after Voronoi summation formula, summation over $m$ and $l$ are supported on $m, l \ll \frac{Q^2 (QN)^\epsilon}{N}$. 

We want to calculate the derivative of $ \mathcal{H}_h^{+} (w; y, z)$ with respect to variable $w$.  We first substitute the following change of variables,

\noindent
$ w + xH -Nu^\prime = Nu  \hspace{10pt} \textrm{and} \hspace{10pt} w + 2xH -N v^\prime = N v^\prime. $
We have
\begin{align*}
\mathcal{H}_h^{+} (w; y, z)&=   N^2 4 \cosh(\pi \nu) \iint_{\mathbb{R}^2} W_2\left(\frac{w + xH - N u}{N} \right) W_3 \left( \frac{w + 2xH - N v}{N} \right) \int_{ \mathbb{R}} h\left( \frac{q_1}{Q_1}, \frac{N u}{Q_1^2}  \right)  \\
  &  \hspace{2cm}  \times  h\left( \frac{q_2}{Q_2}, \frac{N v}{Q_2^2} \right)  V\left( x\right)  e\left( - \frac{h Hx}{q_1 q_2}\right) dx     K_{2 i\nu} \left(4 \pi \sqrt{y  (w + xH - N u )}  \right)  \\
  &  \hspace{2cm} \times  K_{2 i\nu} \left(4 \pi \sqrt{z (w + 2 xH - N u )}\right) du \ dv.
\end{align*} For $z>0$, for all $\nu$ and $k\geq 0$, we have following  bound for Bessels functions (see \cite[Lemma C.1 and C.2]{ KMV} )
\begin{align*}
&\left( z^\nu J_\nu (z)\right)^\prime = z^\nu J_{\nu-1} (z),  \ \ \left( z^\nu K_\nu (z)\right)^\prime = z^\nu K_{\nu-1} (z), \ \ \  \left( z^\nu Y_\nu (z)\right)^\prime = z^\nu Y_{\nu-1} (z),\\
 \end{align*}
 Using Lemma \ref{hb} and above bounds for the Bessels functions we have:

\begin{align} \label{partial}
 &\frac{\partial}{\partial w}\mathcal{H}_h^{+} (w; y, z) \notag\\
 &=  \frac{\partial}{\partial w}  N^2 \frac{4 \cosh(\pi \nu)}{ Q_1^2} \iint_{  u\ll \frac{q_1}{Q_1}, v\ll \frac{q_2}{Q_2}} W_2\left(\frac{w + xH - N u}{N} \right)  \notag\\
 & \hspace{20pt} \times  W_3 \left( \frac{w + 2xH - N v}{N} \right) \int_{\mathbb{R}}  h\left( \frac{q_1}{Q_1}, \frac{N u}{Q_1^2}  \right) h\left( \frac{q_2}{Q_2}, \frac{N u}{Q_1^2}  \right) V\left( x\right)  e\left( - \frac{h Hx}{q_1 q_2}\right)  \notag\\
 &  \hspace{1cm}   \times  K_{2 i\nu} \left(4 \pi \sqrt{y  (w + xH - N u )}  \right) 
     K_{2 i\nu} \left(4 \pi \sqrt{z (w + 2 xH - N u )}\right)  du \ dv \ dx  \notag\\ 
  & \ll N^2 \frac{Q_1}{q_1} \frac{Q_2}{q_2} \iint_{  u\ll \frac{q_1}{Q_1}, v\ll \frac{q_2}{Q_2}} \left( \frac{1}{N} + \frac{1}{N}  \right. \qquad \notag\\
  &  \hspace{3cm}\left. + \frac{\sqrt{y}}{ \sqrt{w + xH - N u} } K_{2 i\nu}^\prime \left(4 \pi \sqrt{y  (w + xH - N u )}  \right) + \cdots \right) du \ dv \ dx  \notag\\
  & \ll N^2 \frac{Q_1}{q_1} \frac{Q_2}{q_2} \iint_{  u\ll \frac{q_1}{Q_1}, v\ll \frac{q_2}{Q_2}} \left( \frac{1}{N} + \frac{1}{N}  \right. \qquad \notag\\
  &  \hspace{3cm}\left. + \frac{1}{N}\sqrt{y(w + xH - N u)} K_{2 i\nu}^\prime \left(4 \pi \sqrt{y  (w + xH - N u )}  \right) + \cdots \right) du \ dv \ dx  \notag \notag \\
  &\ll N^2 \frac{Q_1}{q_1} \frac{Q_2}{q_2} \frac{1}{N} \frac{q_1}{Q_1} \frac{q_2}{Q_2} \ll N, 
\end{align} 
 as $x$-integral   is supported on the interval $[1, 2]$.

\noindent
From  the congruence relation given in the equation \eqref{afterpoisson},  we have
\[
a_1 q_2 + 2a_2 q_1+h \equiv 0 (q_1 q_2) \Rightarrow a_1 q_2 +h \equiv 0 (q_1 ) \ \  \textrm{ and} \ \   2a_2 q_1+h \equiv 0 ( q_2). 
\] 
 After the application of Poisson and Voronoi summation formulae,  we have:
 \begin{align} \label{lasts}
S &= \frac{c_{Q_1} c_{Q_2}}{Q_1^2 Q_2^2}  \sum_{q_1\leq Q_1}   \sum_{q_2\leq Q_2} \sum_{m, l \ll X^\epsilon}
\sum_{ h\leq \left( \frac{q_1 q_2}{H} + 1 \right)} \frac{1}{q_1 q_2} \lambda_2 (m)  \lambda_3 (l) e\left( \frac{\overline{a_1} m }{q_1 }  \right) e\left(\frac{\overline{a_2} l }{q_2 }   \right)  \times \notag\\ 
& \hspace{2cm} \sum_{ n \in \mathbb{Z}} \lambda_1 (n)  e\left( \frac{-h - a_2 q_1 }{q_1 q_2} n \right)  W_1 \left(\frac{n}{N} \right) \mathcal{H}_{ h}^{\pm} \left( n;  \frac{m}{q_1^2},  \frac{l}{q_2^2}\right) 
 \end{align}
 Now we use  Riemann Stieltjes integral to evaluate last sum in above equation.  For any $ \alpha \in \mathbb{R}$, by using cancellation in additive twist (see equations \eqref{sl2})  and equation \eqref{partial},  we have
 
 \begin{align} \label{lastsum} 
 \sum_{ n \in \mathbb{Z}} \lambda_1 (n)  e\left(  n \alpha \right )  W_1 \left(\frac{n}{N} \right) \mathcal{H}_{ h}^{\pm} \left( n;  \frac{m}{q_1^2},  \frac{l}{q_2^2}\right) &= \int_N^{2N}  \left( \sum_{ n \leq y} \lambda_1 (n)  e(  n \alpha) \right) \frac{\partial}{\partial y}\mathcal{H}_{ h}^{\pm} \left( y;  \frac{m}{q_1^2},  \frac{l}{q_2^2}\right) dy \notag \\
 &\ll \int_N^{2N} y^{1/2} N   dy \ll N^{5/2}.  
 \end{align}
 Substituting bound of equation \eqref{lastsum} in equation \eqref{lasts} we have
 
 \begin{align*}
 S &\ll \frac{c_{Q_1} c_{Q_2}}{Q_1^2 Q_2^2}  \sum_{q_1\leq Q_1}   \sum_{q_2\leq Q_2} \sum_{m, l \ll X^\epsilon}
\sum_{ h\leq \left( \frac{q_1 q_2}{H} + 1 \right)} \frac{1}{q_1 q_2}  N^{5/2}  \\
& \ll \frac{c_{Q_1} c_{Q_2} N^{5/2+\epsilon} }{Q_1^2 Q_2^2 }  \sum_{q_1\leq Q_1}   \sum_{q_2\leq Q_2} \frac{1}{q_1 q_2}  \left( \frac{q_1 q_2}{H} +1 \right)\ll \frac{ N^{3/2+\epsilon} }{ H} +   N^{1/2 +\epsilon} \ll \frac{ N^{3/2+\epsilon} }{ H}. 
 \end{align*}
as $ Q_1$ and $ Q_2 = \sqrt{N}$.

%

\vspace{1cm}

\noindent
{\bf Acknowledgement:} Author would like to thank Prof. Ritabrata Munshi for suggesting the problem and for all fruitful discussions and suggestions. Author would also like to thank Prof. Tim Browning and Vinay Kumaraswamy for their thorough analysis of the manuscripts and his suggestion about many slips in earlier version of this paper. 
\vskip 1mm
\noindent 
{}
\end{document}